\newcommand{\IB}{\mathbb{B}}
\newcommand{\IC}{\mathbb{C}}
\newcommand{\IM}{\mathbb{M}}
\newcommand{\IN}{\mathbb{N}}
\newcommand{\IP}{\mathbb{P}}
\newcommand{\IR}{\mathbb{R}}
\newcommand{\IS}{\mathbb{S}}
\newcommand{\cP}{\mathcal{P}}
\newcommand{\ve}{\varepsilon}
\newcommand{\vp}{\varphi}
\newcommand{\Ga}{\Gamma}
\DeclareMathOperator{\supp}{\mathop{supp}}
\DeclareMathOperator{\dist}{dist}
\DeclareMathOperator{\id}{id}
\DeclareMathOperator{\Ball}{Ball}
\DeclareMathOperator{\graph}{graph}
\DeclareMathOperator{\dom}{dom}
\DeclareMathOperator{\ran}{ran}
\DeclareMathOperator{\diag}{diag}
\DeclareMathOperator{\Prob}{Prob}
\newcommand{\ip}[1]{\mathopen{\langle}#1\mathclose{\rangle}}
\newtheorem{thmA}{Theorem}
\newtheorem{corA}[thmA]{Corollary}
\newtheorem{thm}{Theorem}
\newtheorem{lem}[thm]{Lemma}
\theoremstyle{definition}
\theoremstyle{remark}
\title[Uniform Roe algebras and quasi-local algebras]{Embeddings of matrix algebras into uniform Roe algebras and quasi-local algebras}
\author{Narutaka Ozawa}
\address{RIMS, Kyoto University, \mbox{606-8502} Japan}
\email{narutaka@kurims.kyoto-u.ac.jp}
\thanks{The author was partially supported by JSPS KAKENHI Grant Numbers 20H01806 and 20H00114}
\subjclass{47C15, 46L05, 05C48}
\keywords{Uniform Roe algebras, quasi-local operators, expanders}
\date{\today}
\begin{document}
\begin{abstract}
We answer the recent problem posed by Baudier, Braga, Farah, 
Vignati, and Willett that asks whether the $\ell_\infty$-direct 
sum of the matrix algebras embeds into the uniform Roe algebra 
or the quasi-local algebra of a uniformly locally finite metric space. 
The answers are no and yes, respectively. 
This yields the existence of a quasi-local operator 
that is not approximable by finite propagation operators. 
\end{abstract}
\maketitle
\section{Introduction}
Throughout this paper, we are interested in 
a (discrete) metric space $X$ that is 
\emph{uniformly locally finite}, or \emph{ulf} in short 
(a.k.a.\ of bounded geometry), i.e., 
\[
N_X(R) := \sup\{ |\Ball(x,R)| : x \in X \} < \infty 
\]
for every $R>0$, 
where $\Ball(x,R) := \{ y\in X : \dist(y,x)\le R\}$. 
Associated with $X$ are the 
\emph{uniform Roe algebra} $\mathrm{C}^*_\mathrm{u}[X]$ 
and the \emph{quasi-local algebra} $\mathrm{C}^*_\mathrm{ql}[X]$, 
prototypes of which are introduced in \cite{roe:jdg}. 
These are $\mathrm{C}^*$-sub\-alge\-bras 
of the $\mathrm{C}^*$-alge\-bra $\IB(\ell_2X)$ 
of bounded operators on the Hilbert space $\ell_2X$. 
For $R>0$, we denote by 
\[
\IC_\mathrm{u}^R[X]:=\{ u\in\IB(\ell_2X) : \ip{u\delta_x,\delta_y}=0\mbox{ whenever $\dist(x,y)> R$}\}
\]
the set of all operators with \emph{propagation at most $R$}. 
The uniform Roe algebra $\mathrm{C}^*_\mathrm{u}[X]$ 
is the norm closure of the $*$-algebra $\bigcup_{R>0}\IC_\mathrm{u}^R[X]$ 
of finite propagation operators on $\ell_2X$.
An operator $u$ on $\ell_2X$ is said to have 
\emph{$\ve$-propagation at most $R$} if it satisfies 
$\|1_Au1_B\|\le\ve$ whenever $A,B\subset X$ are such that $\dist(A,B)>R$. 
Here, $1_A \in \IB(\ell_2X)$ stands for the orthogonal projection 
from $\ell_2X$ onto $\ell_2A$ for any $A\subset X$. 
An operator $u\in\IB(\ell_2X)$ is \emph{quasi-local} if it has finite 
$\ve$-propagation for all $\ve>0$. 
The quasi-local algebra $\mathrm{C}^*_\mathrm{ql}[X]$ 
is the $\mathrm{C}^*$-alge\-bra consisting of all 
quasi-local operators. 
It is plain to see that 
$\mathrm{C}^*_\mathrm{u}[X] \subset \mathrm{C}^*_\mathrm{ql}[X]$. 
The uniform Roe algebras and the quasi-local algebras 
have different advantages. 
Generally speaking, an operator in $\mathrm{C}^*_\mathrm{u}[X]$ 
is easier to handle than that in $\mathrm{C}^*_\mathrm{ql}[X]$, 
but it is harder to tell if a given operator $u$ belongs 
to $\mathrm{C}^*_\mathrm{u}[X]$. 
Thus the problem whether they coincide or not 
has caught considerable attention 
(p.\ 20 in \cite{roe:cbms}, see also 
\cite{bbfvw, engel, klvz, lnsz, st, sz} and references therein). 
It is proved in \cite{sz} that a large class of ulf metric spaces, namely those 
with property~A, satisfy the equality 
$\mathrm{C}^*_\mathrm{u}[X] = \mathrm{C}^*_\mathrm{ql}[X]$.
See Section~\ref{sec:propertyA} for the definition of property~A 
and an alternative proof of this fact. 
In this paper, we prove that the inclusion 
$\mathrm{C}^*_\mathrm{u}[X] \subset \mathrm{C}^*_\mathrm{ql}[X]$ 
can be proper in general. 
The proof takes a roundabout way and goes by 
studying the embeddability of 
the $\mathrm{C}^*$-alge\-bra $\prod_n \IM_n$ 
of the $\ell_\infty$-direct sum of matrix algebras. 
Whether embeddings are unital or not will make no essential difference.

\begin{thmA}\label{thm:A}
The $\mathrm{C}^*$-alge\-bra $\prod_n \IM_n$ does not 
embed into the uniform Roe algebra 
$\mathrm{C}^*_\mathrm{u}[X]$ of any ulf metric space $X$. 
\end{thmA}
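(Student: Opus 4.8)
The plan is to argue by contradiction, turning an embedding into a system of matrix units and then confronting it with the finite–propagation structure of $\mathrm{C}^*_\mathrm{u}[X]$. Suppose $\pi\colon\prod_n\IM_n\to\mathrm{C}^*_\mathrm{u}[X]$ is an embedding, which by the remark in the introduction I may take to be unital. Writing $z_n$ for the unit of the $n$-th block, the projections $Z_n:=\pi(z_n)$ are pairwise orthogonal, and each restriction $\pi|_{\IM_n}$ is a unital embedding of $\IM_n$ into the corner $Z_n\mathrm{C}^*_\mathrm{u}[X]Z_n$. This yields matrix units $E^{(n)}_{ij}:=\pi(e^{(n)}_{ij})\in\mathrm{C}^*_\mathrm{u}[X]$ for every $n$; in particular the partial isometries $E^{(n)}_{i1}$ ($1\le i\le n$) share the source projection $E^{(n)}_{11}$ and have pairwise orthogonal ranges $E^{(n)}_{ii}$. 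The task is to show that carrying all these systems \emph{simultaneously} is impossible in a uniform Roe algebra.

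The quantitative engine I would isolate is a purely Hilbert–space localization estimate. If $v_1,\dots,v_m\in\IB(\ell_2X)$ are partial isometries with a common source projection $q$, with pairwise orthogonal ranges, and of propagation at most $R$, then $v_k^*v_j=0$ for $j\ne k$, so for any unit vector $\eta\in q\ell_2X$ the vectors $v_1\eta,\dots,v_m\eta$ are orthonormal; moreover if $\eta$ is $\delta$-concentrated on $\Ball(x,r)$, then each $v_i\eta$ is $\delta$-concentrated on $\Ball(x,r+R)$, whence $m\le(1-\delta)^{-2}N_X(r+R)$. This is the only place where uniform local finiteness enters. A useful preliminary observation is that norm-approximability genuinely pins down propagation \emph{uniformly across blocks}: for each fixed $i$ the single element $(e^{(n)}_{i1})_{n\ge i}\in\prod_n\IM_n$ has image in $\mathrm{C}^*_\mathrm{u}[X]$, so for each $\ve$ there is one radius $R_i$ (independent of $n$) at which every $E^{(n)}_{i1}$ is $\ve$-approximated by a propagation-$R_i$ operator. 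Feeding the first $m$ directions into the engine gives $m\le C\,N_X(r+\max_{i\le m}R_i)$, uniformly in $n$.

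The crux — and the step I expect to be the main obstacle — is that this bound is not yet a contradiction: the ``far'' matrix units $E^{(n)}_{n1}$ intrinsically require propagation growing with $n$, so the radii $R_i$ may tend to infinity and the engine only ever controls boundedly many directions at a bounded scale, leaving the largeness of $n$ unused. Overcoming this is exactly where the \emph{norm}-approximability of $\mathrm{C}^*_\mathrm{u}[X]$, as opposed to the mere quasi-locality it shares with $\mathrm{C}^*_\mathrm{ql}[X]$, must be exploited (every element of the image is automatically quasi-local, so no soft argument can close the gap). My plan is to pass the finite-propagation approximants of the coherent elements $(e^{(n)}_{ij})_n$ to a limit along an ultrafilter on $X$, chosen — using the faithfulness of the canonical diagonal conditional expectation $E\colon\mathrm{C}^*_\mathrm{u}[X]\to\ell_\infty(X)$ to locate a point $x$ carrying the mass of a prescribed large block — so that the full system $E^{(n)}_{ij}$ is recovered, with the matrix-unit relations surviving up to controlled error, as operators supported on balls of a fixed radius $R$, a space of dimension at most $N_X(R)$. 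This would force $n\le N_X(R)$ for arbitrarily large $n$, the desired contradiction. The technical heart, and the part requiring genuine care, is making the error control \emph{uniform in the block index} and coherent with the localization of the test vectors, so that the finite-dimensional obstruction is not washed out in the limit.
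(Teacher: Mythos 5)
Your diagnosis of the difficulty is exactly right, but your proposed resolution of it is not an argument, and I do not believe it can be made into one as stated. The only propagation control an embedding $\pi\colon\prod_n\IM_n\to\mathrm{C}^*_\mathrm{u}[X]$ gives you is \emph{per element}: for each fixed coherent element such as $v_i:=\pi\bigl((e^{(n)}_{i1})_{n\ge i}\bigr)$ and each $\ve$ you get one radius $R_i(\ve)$, and these radii genuinely may grow with $i$ — as you yourself observe. An ultralimit along points of $X$ (or along blocks) preserves exactly this per-element data and nothing more: the limit of the system $E^{(n)}_{ij}$ is again a family in which the $(i,j)$-th operator has $\ve$-controlled propagation only at radius $R_{ij}(\ve)$, with no uniformity in $(i,j)$. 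So the claimed conclusion — that the full system of an arbitrarily large block is ``recovered as operators supported on balls of a fixed radius $R$,'' forcing $n\le N_X(R)$ — is precisely the missing step, restated rather than proved; the ultrafilter does not manufacture the uniformity. Two further concrete problems: (a) your ``preliminary observation'' that each individual $E^{(n)}_{i1}$ is $\ve$-approximated at radius $R_i$ uniformly in $n$ does not follow from approximating $\sum_n E^{(n)}_{i1}$, since compressing a finite-propagation approximant by $Z_n=\pi(z_n)$ destroys finite propagation (this is repairable by running the engine on the summed partial isometries directly); and (b) your engine needs a unit vector in the common source space that is $\delta$-concentrated on a ball, and such vectors need not exist: extracting one from a propagation-$R_1(\ve)$ approximant $c$ of the source projection gives concentration error of order $\ve N_X(R_1(\ve))$ (one only knows $\max_x\|c\delta_x\|\gtrsim 1/N_X(R_1)$ by a Schur-type bound), and since $R_1$ grows as $\ve$ shrinks this is circular; in a general ulf space there is no submultiplicativity of $N_X$ to break the circle. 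Note also that partial isometries with common source, orthogonal ranges, and per-element $\ve$-propagation control \emph{do} exist for all $n$ simultaneously inside $\mathrm{C}^*_\mathrm{ql}[X]$ (that is the content of Theorem~\ref{thm:B}), so any successful argument must exploit norm-approximability by the exact band $\IC_\mathrm{u}^R[X]$ in a way your sketch does not yet pin down.

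For comparison, the paper sidesteps the growing-radii obstruction entirely by reversing the quantifiers: Lemma~\ref{lem:main} shows that for each \emph{fixed} $R$, any embedded block $\IM_n$ with $N_X(R)<\sqrt n/8$ contains a unit-ball element at distance $\ge 1/2$ from $\IC_\mathrm{u}^R[X]$, even after adding an arbitrary contraction from the complementary corner (this corner quantifier is what handles interference from the other blocks); the diagonal element $\diag_n(a_n)_n$ then defeats every radius at once. The mechanism is operator-space theoretic rather than vector-localization: the band $\IC_\mathrm{u}^R[X]$ decomposes into $2N_X(R)$ graphs of partial translations (Lemma~\ref{lem:graph}), each completely isometric to an $\ell_\infty$-space, and averaging an irreducible representation against its conjugate (Lemma~\ref{lem:irr}) shows such $\ell_\infty$-pieces can carry $\IM_n$ only with norm loss $1/\sqrt n$ — so a single bad element per block suffices and no uniform radius for a whole matrix-unit system is ever needed. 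If you want to salvage your approach, the lesson is that you should aim to contradict approximability of \emph{one} carefully chosen element of a large block at a fixed $R$, rather than trying to localize the entire system of matrix units.
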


\begin{thmA}\label{thm:B}
The $\mathrm{C}^*$-alge\-bra $\prod_n \IM_n$ 
embeds into the quasi-local algebra 
$\mathrm{C}^*_\mathrm{ql}[X]$ of a ulf metric space $X$, 
provided that $X$ contains a sequence of expanders. 
\end{thmA}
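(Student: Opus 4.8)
The plan is to devote a separate expander to each matrix factor and to let the spectral gap manufacture quasi-locality. Since $X$ contains a sequence of expanders, after passing to a subsequence and relabelling I fix pairwise disjoint finite subsets $X_1,X_2,\dots\subset X$, where $X_n$ carries a regular graph whose normalized adjacency operator $M_n\in\IB(\ell_2X_n)$ is self-adjoint with largest eigenvalue $1$, corresponding eigenprojection $P_n$ (the projection onto the constants, with kernel $P_n(x,y)=1/N_n$, where $N_n:=|X_n|$), and all other eigenvalues bounded in modulus by a single $\lambda<1$; I also arrange that the metric restricted to $X_n$ dominates the graph metric, so that $M_n^R(x,y)=0$ whenever $\dist(x,y)>R$. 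Crucially, I choose the blocks so that $N_n$ grows as fast as I please relative to $n$, say $n/N_n\to0$. I will build the embedding in block-diagonal form $\Phi((a_n)_n)=\bigoplus_n V_na_nV_n^*$, where each $V_n\colon\IC^n\to\ell_2X_n$ is an isometry to be chosen. For any isometry $V_n$ the map $a\mapsto V_naV_n^*$ is an isometric $*$-homomorphism of $\IM_n$ onto the corner $(V_nV_n^*)\IB(\ell_2X_n)(V_nV_n^*)$, so $\Phi$ is automatically an isometric, hence injective, $*$-homomorphism of $\prod_n\IM_n$ into $\IB(\ell_2X)$. The whole problem is thus to choose the $V_n$ so that $\Phi$ takes values in $\mathrm{C}^*_\mathrm{ql}[X]$.

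Quasi-locality will follow by playing off two estimates against each other. The geometric input is expander mixing: writing $\chi_A:=\sum_{x\in A}\delta_x$, if $\dist(A,B)>R$ then $\ip{\chi_A,M_n^R\chi_B}=0$ because $M_n^R$ has no entries across distance $>R$, while $M_n^R=P_n+(M_n^R-P_n)$ with $\|M_n^R-P_n\|=\|(M_n-P_n)^R\|\le\lambda^R$ and $\ip{\chi_A,P_n\chi_B}=|A||B|/N_n$; comparing forces $\sqrt{|A||B|}\le\lambda^RN_n$. In words, two far-apart subsets of an expander cannot both be large: if $\dist(A,B)>R$ and $|A|\le|B|$ then $|A|\le\sqrt{|A||B|}\le\lambda^RN_n$. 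The second, harder input is a restricted-isometry bound on the $V_n$: I will need, uniformly in $n$ and in $A\subset X_n$,
\[
\|V_n^*1_AV_n\|\ \le\ C\Big(\tfrac{|A|}{N_n}\,\log\tfrac{N_n}{|A|}+\tfrac{n}{N_n}\Big)
\]
for an absolute constant $C$. Granting both, take $\|a\|\le1$ and $\dist(A,B)>R$ with $|A|\le|B|$; bounding the $B$-side crudely by $\|V_n^*1_B\|\le\|V_n\|=1$ and using $\|1_AV_n\|=\|V_n^*1_AV_n\|^{1/2}$ gives
\[
\|1_AV_naV_n^*1_B\|\ \le\ \|1_AV_n\|\ \le\ \Big(C\,\tfrac{|A|}{N_n}\log\tfrac{N_n}{|A|}+C\,\tfrac{n}{N_n}\Big)^{1/2}.
\]
Since $|A|/N_n\le\lambda^R$ and $t\log(1/t)$ is increasing for $t<e^{-1}$, the first term is at most $C\lambda^R\log(1/\lambda^R)=C\,R\lambda^R\log(1/\lambda)$, which tends to $0$ as $R\to\infty$ \emph{independently of $n$}; the second term tends to $0$ as $n\to\infty$ by the choice $n/N_n\to0$.

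The construction of isometries $V_n$ obeying the displayed restricted-isometry bound is the heart of the matter and the step I expect to fight with. The natural choice is to let $V_n$ be the first $n$ columns of a Haar-random unitary on $\ell_2X_n$: then $V_n$ is automatically an isometry, the rows $\eta_x:=V_n^*\delta_x\in\IC^n$ form a Parseval frame ($\sum_x\eta_x\eta_x^*=I_n$) of nearly flat vectors ($\|\eta_x\|^2\approx n/N_n$), and $V_n^*1_AV_n=\sum_{x\in A}\eta_x\eta_x^*$ is a sum of $|A|$ rank-one terms whose operator norm behaves like $\max(|A|,n)/N_n$. The bound for all $A$ simultaneously is precisely a restricted-isometry property at every sparsity level, provable by matrix-concentration estimates for sampled rows of an orthonormal system together with a union bound over subsets, the union bound producing the logarithmic factor. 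The delicate point is that this factor must not smuggle back a dependence on $N_n$; the computation above shows that it does not, since the mixing inequality only ever feeds the bound sets $A$ with $|A|/N_n$ exponentially small in $R$, against which $t\log(1/t)\to0$. Should worst-case expanders prove awkward for the probabilistic argument, a fallback is to require the given sequence of expanders to have delocalized structure (for instance Cayley or Schreier graphs), replacing random frames by structured ones.

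Finally I assemble the quasi-locality estimate for a fixed $T=\Phi((a_n)_n)$. Given $\ve>0$, choose $R_0$ so large that $\lambda^{R_0}\le e^{-1}$ and $C R_0\lambda^{R_0}\log(1/\lambda)\le\ve^2/2$, and choose $n_0$ with $Cn/N_n\le\ve^2/2$ for all $n\ge n_0$; as the finitely many blocks $X_n$ with $n<n_0$ have finite diameter $D_n$, set $R=\max(R_0,\max_{n<n_0}D_n)$. For $\dist(A,B)>R$ the operator $1_AT1_B=\bigoplus_n 1_{A\cap X_n}V_na_nV_n^*1_{B\cap X_n}$ has vanishing summands for $n<n_0$ (there $\dist>R\ge D_n$ forces $A\cap X_n$ or $B\cap X_n$ to be empty) and, by the second paragraph, summands of norm at most $\ve\|a_n\|\le\ve\|T\|$ for $n\ge n_0$; being block diagonal, $\|1_AT1_B\|\le\ve\|T\|$. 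Thus every $T$ in the range of $\Phi$ has finite $\ve$-propagation for each $\ve>0$, so $\Phi$ embeds $\prod_n\IM_n$ into $\mathrm{C}^*_\mathrm{ql}[X]$, as required.
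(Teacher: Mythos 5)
Your construction is, in substance, the paper's own proof: both embed $\prod_n\IM_n$ block-diagonally as $\prod_n\IB(V(n))$ for random $n$-dimensional subspaces $V(n)\subset\ell_2X_n$ (your $V_nV_n^*$ is exactly $P_{V(n)}$, and $\|1_AV_n\|=\|1_AP_{V(n)}\|$), and both derive quasi-locality by combining (i) ``far apart implies one of the two sets has small relative density'' with (ii) a high-probability bound on $\|P_V1_A\|$ valid for all sparse $A\subset X_n$ simultaneously, proved by measure concentration plus a union bound over subsets. The differences are in the derivations. For (i) you invoke the spectral mixing lemma, which presupposes graph expanders with a uniform spectral gap, whereas the paper defines expanders metrically and obtains $\min\{|A|/|X_n|,\,|B|/|X_n|\}\le\kappa^{-\dist(A,B)/2R}$ directly by iterating the expansion condition --- shorter and more general (this ``asymptotic expander'' inequality is all that is used). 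If you insist on starting from the metric definition, your route still works via the $R$-neighbourhood graph, whose degree is bounded by uniform local finiteness and whose vertex expansion gives a spectral gap by a Cheeger-type inequality; note, though, that your domination requirement is stated backwards: you need the graph metric to dominate the restricted ambient metric, so that $M_n^R(x,y)=0$ whenever $\dist(x,y)>R$.

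For (ii), your displayed restricted-isometry bound is false as stated for dense $A$: taking $A=X_n$ gives $\|V_n^*1_AV_n\|=1$ while your right-hand side is $Cn/N_n\to0$. The correct form carries $\log(eN_n/|A|)$, or one simply restricts to $|A|/N_n\le e^{-1}$ --- which costs nothing, since the mixing step only ever feeds you sets of density at most $\lambda^R$, exactly as you observe. You leave the proof of this lemma as ``the heart of the matter''; it is precisely the paper's Lemma on random subspaces, proved by L\'evy concentration on $\IS^{d-1}$, the entropy count $\log\binom{d}{\delta d}\le H(\delta)d$, and an $\ve$-net over coefficient vectors $\alpha\in\IS^{n-1}$, so your sketch (matrix concentration plus union bound, with the log factor coming from the subset count) is the right one and the worry about worst-case expanders is unfounded: the randomness lives entirely in the choice of $V_n$, not in the expander. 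One simplification worth stealing from the paper: since the $n$-th block only ever needs the bound at the finitely many densities $\delta_k=1/k$, $k\le n$, one may fix $n$ and these $\delta_k$ and take $d(n)$ as large as one pleases, so no uniformity of your absolute constant $C$ across all sparsity levels is needed --- existence for each $n$ separately suffices, and the role of your hypothesis $n/N_n\to0$ is absorbed into choosing $d(n)\ge D(n,\delta_n)$.
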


See Section~\ref{sec:B} for the definition of expanders. 
The above results answer the problem posed in \cite{bbfvw}, 
where it is proved that non-atomic von Neumann 
algebras do not embed into quasi-local algebras, 
leaving the possibility for the atomic von Neumann algebra 
$\prod_n \IM_n$ open. 

\begin{corA}
For any ulf metric space $X$ that contains a sequence 
of expanders, the inclusion 
$\mathrm{C}^*_\mathrm{u}[X] \subset \mathrm{C}^*_\mathrm{ql}[X]$ 
is proper. In other words, there exists a quasi-local operator that 
is not approximable by finite propagation operators. 
\end{corA}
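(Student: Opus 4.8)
The plan is to obtain this as an immediate consequence of Theorems~\ref{thm:A} and \ref{thm:B}, which between them pin down $\prod_n\IM_n$ on opposite sides of the inclusion $\mathrm{C}^*_\mathrm{u}[X]\subset\mathrm{C}^*_\mathrm{ql}[X]$. Fix a ulf metric space $X$ containing a sequence of expanders. By Theorem~\ref{thm:B} there is an embedding (injective $*$-homomorphism) $\Phi\colon\prod_n\IM_n\to\mathrm{C}^*_\mathrm{ql}[X]$. I would then argue by contradiction: if the inclusion were an equality, i.e.\ $\mathrm{C}^*_\mathrm{u}[X]=\mathrm{C}^*_\mathrm{ql}[X]$ as $\mathrm{C}^*$-subalgebras of $\IB(\ell_2X)$, then the very same map $\Phi$ would be an embedding of $\prod_n\IM_n$ into $\mathrm{C}^*_\mathrm{u}[X]$, directly contradicting Theorem~\ref{thm:A}. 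Hence the inclusion $\mathrm{C}^*_\mathrm{u}[X]\subset\mathrm{C}^*_\mathrm{ql}[X]$ must be proper.

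For the ``in other words'' reformulation, I would simply unwind the definition of $\mathrm{C}^*_\mathrm{u}[X]$. Properness means there exists an operator $u\in\mathrm{C}^*_\mathrm{ql}[X]\setminus\mathrm{C}^*_\mathrm{u}[X]$; such a $u$ is quasi-local by membership in $\mathrm{C}^*_\mathrm{ql}[X]$, yet lies outside the norm closure of $\bigcup_{R>0}\IC_\mathrm{u}^R[X]$, which is precisely to say that $u$ is not approximable in norm by finite propagation operators. This completes the deduction.

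There is essentially no obstacle remaining in the corollary itself: all the difficulty resides in Theorems~\ref{thm:A} and \ref{thm:B}, which are already established. The only point worth a moment's care is the logical direction of the argument — one uses the existence of the embedding from Theorem~\ref{thm:B} together with the \emph{non}-existence from Theorem~\ref{thm:A}, so it is the combination, not either theorem alone, that forces the two algebras apart. I would therefore keep the write-up to the short contradiction above, taking care that the embedding supplied by Theorem~\ref{thm:B} is viewed as a map into $\IB(\ell_2X)$ whose range lands in $\mathrm{C}^*_\mathrm{ql}[X]$, so that the hypothetical equality of subalgebras makes it a map into $\mathrm{C}^*_\mathrm{u}[X]$ without any further adjustment.
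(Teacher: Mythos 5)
Your proposal is correct and is exactly the paper's intended argument: the corollary is stated as an immediate consequence of Theorems~\ref{thm:A} and \ref{thm:B}, and your contradiction (an embedding into $\mathrm{C}^*_\mathrm{ql}[X]$ would become an embedding into $\mathrm{C}^*_\mathrm{u}[X]$ if the two algebras coincided) is precisely the deduction the paper leaves implicit. The unwinding of ``proper inclusion'' into the existence of a quasi-local operator not approximable by finite propagation operators is likewise just the definition of $\mathrm{C}^*_\mathrm{u}[X]$ as the closure of $\bigcup_{R>0}\IC_\mathrm{u}^R[X]$, as you note.
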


We remark that this corollary holds as well in the 
``non-uniform'' setting (see Chapter~3 in \cite{roe:cbms} 
for the definition and, for a given Hilbert $X$-module $H$, 
consider an $X$-embedding $\ell_2X \subset H$).  
Recall that property~A is a kind of amenability condition and 
a sequence of expanders is the most prominent obstruction to it 
(see e.g., Sections 4 \& 5 in \cite{ny}). 
It seems natural to expect that $\prod_n \IM_n$ embeds into 
the quasi-local algebra and hence 
$\mathrm{C}^*_\mathrm{u}[X] \neq \mathrm{C}^*_\mathrm{ql}[X]$
as soon as $X$ does not have property~A. 
\subsection*{Acknowledgments}
The author is grateful to Professor Ilijas Farah for introducing 
him the problem in \cite{bbfvw} that led him to the present work. 
This research was carried out during the author's stay at 
the Fields Institute for Research in Mathematical Sciences 
for ``Thematic Program on Operator Algebras and Applications'' 
in the Fall 2023. 
The author acknowledges the kind hospitality, the exciting 
environment, and the financial support provided by the institute. 
This research was partially supported by 
JSPS KAKENHI Grant Numbers 20H01806 and 20H00114. 
\section{Proof of Theorem~\ref{thm:A}}\label{sec:A}
The proof of Theorem~\ref{thm:A} is motivated by 
an operator space theoretic perspective that the matrix 
algebras are hard to embed completely isomorphically 
into commutative $\mathrm{C}^*$-alge\-bras. 

For every Banach space $E$, we denote by $(E)_1$ 
the closed unit ball of $E$. 
For every projection $p$, we put $p^\perp := 1-p$. 

\begin{lem}\label{lem:main}
Let $R>0$ and $n$ be such that 
$N_X(R) < \sqrt{n}/8$.
Then for any possibly non-unital embedding 
$\IM_n\hookrightarrow\mathrm{C}^*_\mathrm{u}[X]$ with the unit $p$, 
there is $a\in(\IM_n)_1$ satisfying that 
$\dist(a+b,\IC_\mathrm{u}^R[X])\geq 1/2$
for every $b\in (p^\perp\mathrm{C}^*_\mathrm{u}[X]p^\perp)_1$.
\end{lem}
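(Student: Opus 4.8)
The plan is to exploit the tension between the high-dimensional, "genuinely noncommutative" geometry of a matrix algebra and the essentially finite-dimensional, "commutative-like" structure imposed by bounded propagation together with the smallness hypothesis $N_X(R)<\sqrt n/8$. The operator $p$ is the image of the unit of $\IM_n$, a projection in $\mathrm{C}^*_\mathrm{u}[X]$. The key point is that an element of $\IC_\mathrm{u}^R[X]$ is block-tridiagonal with respect to the coarse geometry of $X$: each row and column of its matrix coefficients is supported on a ball of radius $R$, so it is "spread out" over at most $N_X(R)$ coordinates. An embedding of $\IM_n$ produces $n^2$ matrix units, and the combinatorial constraint $N_X(R)<\sqrt n/8$ says there are far fewer independent directions available in $\IC_\mathrm{u}^R[X]$ near $p$ than the matrix algebra requires. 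First I would make this precise by producing a candidate $a$ that detects this deficiency.

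The natural candidate for $a$ is a suitably chosen unitary or self-adjoint element of $\IM_n$ whose image witnesses the incompressibility of the matrix structure — for instance a permutation-type unitary, or an average of matrix units, designed so that $pap=a$ while $a$ has large off-diagonal content relative to \emph{any} coarse decomposition of $X$ compatible with propagation $R$. The plan is to argue by contradiction: suppose that for every such $a$ there were some $b\in(p^\perp\mathrm{C}^*_\mathrm{u}[X]p^\perp)_1$ and some $c\in\IC_\mathrm{u}^R[X]$ with $\|a+b-c\|<1/2$. Compressing by $p$ on both sides kills the $b$ term (since $b=p^\perp b p^\perp$), yielding $\|a-pcp\|<1/2$, so $pcp$ would approximate $a$ to within $1/2$. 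Thus it suffices to show that no single $a\in(\IM_n)_1$ can be approximated within $1/2$ by $pcp$ with $c$ of propagation at most $R$; equivalently, that $\dist(a,p\IC_\mathrm{u}^R[X]p)\ge 1/2$ for a well-chosen $a$.

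The heart of the matter is therefore a counting/dimension estimate showing that the compressions $p\IC_\mathrm{u}^R[X]p$ form a set so thin inside $\IM_n\cong p\mathrm{C}^*_\mathrm{u}[X]p$ that they cannot $\tfrac12$-cover the unit ball. Here I would use that $c$ of propagation $R$ has matrix coefficients vanishing outside the $R$-neighborhood of the diagonal, so when compressed to the range of $p$ its "numerical rank" or its behavior against a random matrix unit is severely constrained by $N_X(R)$. The cleanest route is probably a trace or Hilbert–Schmidt argument: choosing $a$ (perhaps a Haar-random or explicitly combinatorial unitary in $\IM_n$) one estimates the normalized Hilbert–Schmidt inner product $\tfrac1n\operatorname{tr}(a^*pcp)$ and shows it is small — of order $N_X(R)/\sqrt n<\tfrac18$ — forcing $\|a-pcp\|\ge 1/2$ by the relation between operator norm and this pairing. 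The factor $8$ and the threshold $\sqrt n$ strongly suggest that the decisive inequality comes from comparing $\|a\|_2=\sqrt n$ against a bound of the shape $\|pcp\|_{\text{relevant}}\lesssim N_X(R)\|c\|$, with the remaining constants absorbed into the $1/2$.

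The hard part will be controlling the cross term involving $b$: a priori $b$ lives in the \emph{full} corner $p^\perp\mathrm{C}^*_\mathrm{u}[X]p^\perp$, which need not have finite propagation, so one cannot simply treat $a+b$ as a finite-propagation object. The resolution I anticipate is precisely the compression trick above — multiplying by $p$ on both sides annihilates $b$ exactly because $b\in p^\perp(\cdots)p^\perp$, so the burden of the estimate falls entirely on $a$ versus $pcp$ and $b$ drops out. The genuine difficulty then concentrates in choosing $a$ and executing the counting bound uniformly over all $c\in(\IC_\mathrm{u}^R[X])$, where one must pass from the operator-norm distance to a computable pairing and verify that the hypothesis $N_X(R)<\sqrt n/8$ is exactly what makes the pairing fall below the threshold $1/2$.
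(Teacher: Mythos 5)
Your opening reduction is sound and matches a step hidden inside the paper's argument: since $a=pap$ and $b=p^\perp bp^\perp$, compressing an approximation $\|a+b-c\|<1/2$ by $p$ kills $b$ and reduces the lemma to showing $\dist(a,p\,\IC_\mathrm{u}^R[X]\,p)\ge 1/2$ for some $a\in(\IM_n)_1$. But the core of your plan --- a ``counting/dimension estimate'' showing $p\,\IC_\mathrm{u}^R[X]\,p$ is thin, certified by a single (random or combinatorial) $a$ via a normalized trace pairing --- has a genuine gap. First, $\IC_\mathrm{u}^R[X]$ is infinite-dimensional and $p$ is an arbitrary projection in $\mathrm{C}^*_\mathrm{u}[X]$ with no propagation bound, so $pcp$ has no rank or propagation constraint whatsoever; there is no finite set of ``independent directions'' to count, and a union-bound/concentration argument over the ball of $c$'s is unavailable. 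Second, the single-pairing route quantitatively fails: writing $\IC_\mathrm{u}^R[X]$ as a sum of $2N_X(R)$ strands $\ell_\infty(\graph T_i)$ (partial translations, Lemma~\ref{lem:graph} in the paper), the functional $c\mapsto \tau_n(a^*E(pcp))$ restricted to one strand has norm $\sum_{(x,y)\in\graph T_i}|\tau_n(a^*E(pe_{xy}p))|$, an $\ell_1$-sum of absolute values with no cancellation. Already for the diagonal strand, $\sum_x E(pe_{xx}p)=1$, and if each $E(pe_{xx}p)$ is close to rank one this sum is of order $1$ for \emph{every} contraction $a$ (random or not), not $O(1/\sqrt n)$. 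So no fixed $a$ paired against a fixed functional can beat all finite-propagation $c$ simultaneously; the hypothesis $N_X(R)<\sqrt n/8$ cannot be activated this way.

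What the paper does instead is use \emph{many} elements of $\IM_n$ at once: arguing by contradiction, it takes an irreducible representation $\pi\colon\Ga\to\IM_n$ of a finite group, picks approximants $c_g\in\IC_\mathrm{u}^R[X]$ for all $\pi(g)$ simultaneously, and forms the group average $\frac{1}{|\Ga|}\sum_g c_g\otimes\overline{\pi(g)}$, whose norm is close to $1$ (compression by $p\otimes 1$ kills the $b_g$'s --- the analogue of your reduction). The thinness of $\IC_\mathrm{u}^R[X]$ is then exploited completely isometrically: each strand is $\ell_\infty(\graph T_i)$, so after projecting onto a strand the average becomes a scalar-coefficient average $\frac{1}{|\Ga|}\sum_g \alpha_g\overline{\pi(g)}$, which by Schur orthogonality has norm at most $1/\sqrt n$ (the paper's Lemma~\ref{lem:irr}); pigeonholing over the $2N_X(R)$ strands yields $(1-\ve)/2N_X(R) < (1+\ve)/\sqrt n$ and hence the contradiction. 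This group-averaging with operator coefficients $\overline{\pi(g)}$, replacing your single trace pairing, is precisely the missing mechanism that converts the $\ell_\infty$-strand structure into a $1/\sqrt n$ bound. (Minor additional slip: $p\,\mathrm{C}^*_\mathrm{u}[X]\,p$ is in general strictly larger than $\IM_n$, so your identification $\IM_n\cong p\,\mathrm{C}^*_\mathrm{u}[X]\,p$ should be an inclusion.)
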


\begin{proof}[Proof of Theorem~\ref{thm:A}]
Suppose for a contradiction that 
$\prod_n \IM_n\hookrightarrow\mathrm{C}^*_\mathrm{u}[X]$. 
We denote by $p_n \in \mathrm{C}^*_\mathrm{u}[X]$ 
the unit for $\IM_n$. 
Then Lemma~\ref{lem:main} provides 
for each $n$ an element $a_n\in(\IM_n)_1$ that satisfies 
\[
\inf_{b\in (p_n^\perp\mathrm{C}^*_\mathrm{u}[X]p_n^\perp)_1 } \dist(a_n+b,\IC_\mathrm{u}^{R_n}[X]) \geq \frac{1}{2}
\]
for $R_n:=\sup\{R>0 : N_X(R) < \sqrt{n}/8\}-1$.
Notice that $R_n\nearrow\infty$ by uniform local finiteness. 
Now $a:=\diag_n (a_n)_n\in(\prod_n \IM_n)_1$ satisfies 
$\dist(a, \IC_\mathrm{u}^R[X])\geq1/2$ for all $R>0$, 
in contradiction with the hypothesis.
\end{proof}

The rest of this section is devoted for the proof 
of Lemma~\ref{lem:main}. 
The following two lemmas are certainly known to experts, but 
we put their proofs because they are short. 
Recall that a \emph{partial translation} on $X$ is a bijection $T$ 
from $\dom T \subset X$ onto $\ran T \subset X$.  

\begin{lem}\label{lem:graph}
For every $R>0$, there is a family $\{ T_i\}_{i=1}^{2N_X(R)}$ 
of partial translations that satisfies 
$\{ (x,y)\in X : \dist(x,y)\le R \} = \bigsqcup_{i=1}^{2N_X(R)}\graph T_i$. 
\end{lem}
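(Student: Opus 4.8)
The plan is to reformulate the statement as an edge‑coloring problem and then invoke a standard bounded‑degree coloring argument. First I would observe that a subset $S\subset X\times X$ is the graph of a partial translation precisely when it is a \emph{partial matching}, in the sense that no two distinct elements of $S$ share the same first coordinate and no two share the same second coordinate: the first condition says $S$ is the graph of a partial function $T$, while the second says $T$ is injective, so that $T$ is a bijection from $\dom T$ onto $\ran T$. Thus, writing $E_R:=\{(x,y)\in X\times X:\dist(x,y)\le R\}$, the task is exactly to partition $E_R$ into $2N_X(R)$ partial matchings, i.e.\ to color the elements of $E_R$ with $2N_X(R)$ colors so that each color class is a partial matching.

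Next I would set up the associated conflict graph $\mathcal G$, whose vertex set is $E_R$ and in which two distinct pairs are joined by an edge when they agree in at least one coordinate; a proper coloring of $\mathcal G$ is then the same thing as a decomposition of $E_R$ into partial matchings. The key estimate is a bound on the degree of $\mathcal G$. Fix $(x,y)\in E_R$. The pairs sharing its first coordinate are the $(x,y')$ with $y'\in\Ball(x,R)\setminus\{y\}$, of which there are at most $N_X(R)-1$, and the pairs sharing its second coordinate are the $(x',y)$ with $x'\in\Ball(y,R)\setminus\{x\}$, again at most $N_X(R)-1$. Hence every vertex of $\mathcal G$ has degree at most $2N_X(R)-2$.

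Finally I would color. Any graph of maximum degree $d$ is $(d+1)$-colorable: for a finite graph this is the greedy bound, and the infinite case follows from the de~Bruijn--Erd\H{o}s theorem, since every finite subgraph of $\mathcal G$ still has degree at most $2N_X(R)-2$. Therefore $\mathcal G$ admits a proper coloring with $2N_X(R)-1$ colors. Each color class is a partial matching, hence the graph of a partial translation, and these graphs are pairwise disjoint with union $E_R$; padding the list with one empty partial translation yields a family $\{T_i\}_{i=1}^{2N_X(R)}$ as required. The only genuinely non-elementary point is the passage from finite to infinite colorings, handled by de~Bruijn--Erd\H{o}s (equivalently by a compactness argument); the degree count itself is immediate from uniform local finiteness. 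I note that K\"onig's edge-coloring theorem for bipartite graphs would even yield $N_X(R)$ partial translations, but the cruder factor $2$ suffices here and keeps the proof self-contained.
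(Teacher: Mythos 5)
Your proof is correct, and it is a close cousin of the paper's argument rather than a literal match. The combinatorial heart is identical: a pair $(x,y)$ with $\dist(x,y)\le R$ conflicts with at most $N_X(R)-1$ pairs through its first coordinate and at most $N_X(R)-1$ through its second, for a total of $2N_X(R)-2$. The paper exploits this count directly: it takes a family $T_1,\dots,T_{2N_X(R)}$ of partial translations with mutually disjoint graphs that is maximal with respect to graph union (Zorn's lemma), and observes that an uncovered pair $(x_0,y_0)$ would have to be blocked by each of the $2N_X(R)$ translations, either via $x_0\in\dom T_i$ with $T_i(x_0)\in\Ball(x_0,R)\setminus\{y_0\}$ or via $y_0\in\ran T_i$ with $T_i^{-1}(y_0)\in\Ball(y_0,R)\setminus\{x_0\}$; since disjointness of the graphs forces these blocking points to be distinct, the pigeonhole principle gives a contradiction. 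You instead package the same count as a degree bound on a conflict graph and appeal to greedy coloring plus de~Bruijn--Erd\H{o}s to pass from finite subgraphs to all of $X$; the paper's maximality argument handles infiniteness in one stroke via Zorn and needs no coloring theorem, so it is somewhat more self-contained (both routes, of course, use the axiom of choice). What your route buys is a marginally better constant --- $2N_X(R)-1$ classes, and even $N_X(R)$ via K\"onig's edge-coloring theorem on the natural bipartite graph, as you note --- but this is immaterial for the application in Lemma~\ref{lem:main}, where any bound depending only on $R$ suffices. One small point you handled correctly: padding with the empty partial translation (which is legitimately a bijection between empty sets) to produce exactly $2N_X(R)$ translations, matching the indexing in the statement.
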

\begin{proof}
We claim that any maximal (w.r.t.\ the graph union) family 
$T_1,\ldots, T_{2N_X(R)}$ of partial translations with mutually 
disjoint graphs does the job. 
Suppose this is not the case and $(x_0,y_0)\notin \bigsqcup\graph T_i$.
Then for each $i$, either $x_0\in\dom T_i$ 
and $T_i(x_0)\in \Ball(x_0,R)\setminus\{y_0\}$ 
or $y_0\in\ran T_i$ and $T_i^{-1}(y_0)\in \Ball(y_0,R)\setminus\{x_0\}$. 
By the pigeonhole principle, this is impossible. 
\end{proof}

\begin{lem}\label{lem:irr}
For every irreducible unitary representation $\pi\colon\Ga\to\IM_n$ 
of a finite group $\Ga$, one has 
\[
\sup_{\alpha\in(\ell_\infty \Ga)_1} \|\frac{1}{|\Ga|}\sum_{g\in\Ga}\alpha_g\pi(g)\|\le\frac{1}{\sqrt{n}}.
\]
\end{lem}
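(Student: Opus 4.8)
The plan is to estimate the operator norm of $T:=\frac{1}{|\Ga|}\sum_{g\in\Ga}\alpha_g\pi(g)$ by testing it against unit vectors and reducing the whole question to the Schur orthogonality relations, where the irreducibility of $\pi$ enters decisively. It suffices to produce the bound for a fixed $\alpha\in(\ell_\infty\Ga)_1$, i.e.\ with $|\alpha_g|\le 1$ for every $g$.

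First I would fix unit vectors $\xi,\eta\in\IC^n$ and write $\ip{T\xi,\eta}=\frac{1}{|\Ga|}\sum_g\alpha_g\ip{\pi(g)\xi,\eta}$. Since $|\alpha_g|\le 1$, the triangle inequality gives $|\ip{T\xi,\eta}|\le\frac{1}{|\Ga|}\sum_g|\ip{\pi(g)\xi,\eta}|$, at which point the coefficients $\alpha_g$ have played their only role and may be discarded. Applying the Cauchy--Schwarz inequality to the $g$-sum then converts this into the $\ell_2$-average $\bigl(\frac{1}{|\Ga|}\sum_g|\ip{\pi(g)\xi,\eta}|^2\bigr)^{1/2}$.

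The decisive step is to evaluate $\sum_g|\ip{\pi(g)\xi,\eta}|^2$ exactly. Expanding each matrix coefficient in coordinates and summing over $g$ brings in the Schur orthogonality relations $\sum_g\pi(g)_{ij}\overline{\pi(g)_{kl}}=\frac{|\Ga|}{n}\delta_{ik}\delta_{jl}$, valid precisely because $\pi$ is irreducible and unitary of dimension $n$. The cross terms collapse and the sum telescopes to $\frac{|\Ga|}{n}\|\xi\|^2\|\eta\|^2=\frac{|\Ga|}{n}$. Substituting back yields $|\ip{T\xi,\eta}|\le\frac{1}{\sqrt{n}}$ uniformly in the unit vectors, and taking the supremum over $\xi,\eta$ gives $\|T\|\le\frac{1}{\sqrt{n}}$; since $\alpha$ was arbitrary in the unit ball, this is the claimed bound.

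The only real obstacle is bookkeeping: one must be confident that the normalization constant in Schur orthogonality is $|\Ga|/n$ rather than $|\Ga|$, so that the factor $\sqrt{n}$ emerges on the correct side. Irreducibility is essential here; for a reducible representation the constant would depend on the dimensions of the irreducible summands and the estimate would be correspondingly weaker.
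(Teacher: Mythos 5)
Your proof is correct and follows essentially the same route as the paper: both bound $|\ip{T\xi,\eta}|$ by the triangle inequality and Cauchy--Schwarz, reducing everything to the exact evaluation $\frac{1}{|\Ga|}\sum_g|\ip{\pi(g)\xi,\eta}|^2=\frac{1}{n}$ for unit vectors. The only cosmetic difference is in how that second moment is computed: you invoke the Schur orthogonality relations in coordinates, while the paper identifies the average as $\ip{P(\xi\otimes\overline{\xi}),\eta\otimes\overline{\eta}}$ with $P$ the projection onto the $(\pi\otimes\overline{\pi})$-invariant vectors and applies Schur's lemma --- two equivalent formulations of the same fact.
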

\begin{proof}
For any unit vectors $\xi$ and $\eta$, one has 
\begin{align*}
|\ip{ \frac{1}{|\Ga|}\sum_g \alpha_g\pi(g)\xi,\eta}|
 \le(\frac{1}{|\Ga|}\sum_g |\ip{\pi(g)\xi,\eta}|^2)^{1/2}
 =\ip{ P(\xi\otimes\overline{\xi}),(\eta\otimes\overline{\eta})}^{1/2},
\end{align*}
where $P:=|\Ga|^{-1}\sum_g(\pi\otimes\overline{\pi})(g)$
is the orthogonal projection onto the space of 
$(\pi\otimes\overline{\pi})(\Ga)$ invariant vectors. 
Since $\pi$ is irreducible, by Schur's lemma, 
$\ran P = \IC(n^{-1/2}\sum_i\zeta_i\otimes\overline{\zeta_i})$, where $\{\zeta_i\}$ is any orthonormal basis. 
This implies 
$\ip{ P(\xi\otimes\overline{\xi}),(\eta\otimes\overline{\eta})}=1/n$.
\end{proof}

\begin{proof}[Proof of Lemma~\ref{lem:main}]
Put 
\[
\ve:=\max_{a\in(\IM_n)_1}\inf_{b\in (p^\perp\mathrm{C}^*_\mathrm{u}[X]p^\perp)_1} \dist(a+b,\IC_\mathrm{u}^R[X]) + 1/10.
\] 
Take an irreducible unitary representation 
$\pi\colon\Ga\to\IM_n \subset\mathrm{C}^*_\mathrm{u}[X]$ 
of a finite group $\Ga$ (e.g., the standard representation of 
the symmetric group $\Ga=\mathfrak{S}_{n+1}$) 
and choose for each $g\in\Ga$ elements 
$b_g\in (p^\perp\mathrm{C}^*_\mathrm{u}[X]p^\perp)_1$ 
and $c_g\in\IC_\mathrm{u}^R[X]$ such that 
$\| \pi(g)+b_g-c_g \|<\ve$. 
One has 
\begin{align*}
\|(\frac{1}{|\Ga|}\sum_g c_g\otimes\overline{\pi(g)})\|
 > \|\frac{1}{|\Ga|}\sum_g (\pi(g)+b_g)\otimes\overline{\pi}(g)\|
 - \ve
 \geq 1-\ve.
\end{align*}

Let $\{ (x,y)\in X : \dist(x,y)\le R \} = \bigsqcup_{i=1}^{2N_X(R)}\graph T_i$
by Lemma~\ref{lem:graph} 
and denote by $\Phi_i$ the complete contraction from 
$\mathrm{C}^*_\mathrm{u}[X]$ onto the space of operators supported 
on $\graph T_i$. Also, let $\vp_{x,y}\in(\mathrm{C}^*_\mathrm{u}[X]^*)_1$ denote 
the matrix coefficient corresponding to the $(x,y)$ entry. 
Then, $\sum_{i=1}^{2N_X(R)}\Phi_i$ is the projection 
onto $\IC_\mathrm{u}^R[X]$. 
It follows that there must be $i$ such that 
\[
\|(\Phi_i\otimes\id)(\frac{1}{|\Ga|}\sum_g c_g\otimes\overline{\pi(g)})\| > \frac{1-\ve}{2N_X(R)}.
\]
However, since the range of $\Phi_i$ is completely isometric 
to $\ell_\infty(\graph T_i)$, 
one has 
\[
\|(\Phi_i\otimes\id)(\frac{1}{|\Ga|}\sum_g c_g\otimes\overline{\pi(g)})\|
=\sup_{(x,y)\in\graph T_i}\|\frac{1}{|\Ga|}\sum_g \vp_{x,y}(c_g)\overline{\pi(g)}\|
< \frac{1+\ve}{\sqrt{n}}
\]
by Lemma~\ref{lem:irr}. 
Since $N_X(R)<\sqrt{n}/8$, these inequalities imply $\ve>3/5$. 
\end{proof}

We are grateful to one of the referees for pointing out that 
the above proofs also prove that $\prod_n \IM_n$ does not 
embed into the so-called \emph{maximal uniform Roe algebra} 
nor any other $\mathrm{C}^*$-completion of 
the $*$-algebra $\bigcup_R\IC_{\mathrm{u}}^R[X]$. 
\section{Proof of Theorem~\ref{thm:B}}\label{sec:B}
The proof of Theorem~\ref{thm:B} uses a similar idea 
to \cite{lnsz} and \cite{klvz}.  
Recall that a \emph{sequence of expanders} is a sequence $(X_n)_n$ 
of finite metric spaces (finite graphs in most of the literature) 
such that $|X_n|\to\infty$ and 
\[
\kappa:=\inf_n \min_{\begin{subarray}{c} A\subset X_n;\\ 0<|A|/|X_n|\le1/2\end{subarray}}
 \frac{|\{ x \in X_n : \dist(x,A)\le R\}| }{|A|}  > 1
\]
for some $R>0$. 
It yields that for any $n$ and any subsets $A,B\subset X_n$ 
\[
\min\{ |A|/|X_n|,\,|B|/|X_n| \}\le\kappa^{-\dist(A,B)/2R}.
\] 
Hence, the LHS is arbitrarily small if $\dist(A,B)$ is large enough. 
This property (named \emph{asymptotic expanders} in \cite{lnsz}) 
is what we need in this paper. 
It guarantees that an operator on such a space 
with ``well-spread'' matrix coefficients is quasi-local. 

A new ingredient for constructing quasi-local operators 
is a random projection of rank $n$. 
We use the following model of random $n$-dimensional 
subspaces $V$ in $\IR^d = \ell_2([d],\IR)$. 
Here $[d]:=\{1,\ldots,d\}$. 
The difference between real and complex will not matter; 
if necessary,
we view $V$ as its complexification in the complex Hilbert 
space $\ell_2[d]$. 
We consider the probability spaces 
$\IS^{d-1}:=\{ x\in\IR^d : \|x\|=1\}$ and $(\IS^{d-1})^n$ 
with the probability measures $\IP$. 
For $\mathbf{x}:=(x_1,\ldots,x_n) \in (\IS^{d-1})^n$, 
we put $V(\mathbf{x}):=\mathop{\mathrm{span}}\{ x_1,\ldots,x_n\}$, 
which is $n$-dimensional with probability $1$. 
We write $P_V$ for the orthogonal projection onto $V$. 

\begin{lem}\label{lem:random}
For every $n\in\IN$ and $0<\delta<1/10$, there 
are $c>0$ and $D\in\IN$ that satisfy 
the following property. 
The random $n$-dimensional subspace $V$ 
in $\IR^d$, $d\geq D$, satisfies 
\[
\IP\Bigl( \max_{\begin{subarray}{c} E\subset[d];\\ |E|/d\le\delta\end{subarray}} \| P_V |_{\ell_2E}  \| 
 <100\sqrt{\delta\log(1/\delta)} \Bigr)\geq 1-e^{-c d}.
\]
\end{lem}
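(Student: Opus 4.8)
The plan is to prove this via a net-and-concentration argument, bounding the operator norm of $P_V|_{\ell_2 E}$ uniformly over all small coordinate subsets $E$ simultaneously. First I would fix a single subset $E \subset [d]$ with $|E| = m \le \delta d$ and analyze the random quantity $\|P_V|_{\ell_2 E}\| = \|1_E P_V 1_E\|^{1/2}$, which measures how much of the $n$-dimensional random subspace $V$ is concentrated on the $m$ coordinates. Since $V$ is spanned by $n$ independent uniformly random unit vectors $x_1,\dots,x_n$, I expect $\mathbb{E}\,\|1_E x_j\|^2 = m/d \le \delta$, so typically each $x_j$ places only a $\delta$-fraction of its mass on $E$; the factor $\sqrt{\log(1/\delta)}$ in the target bound is the signature of a union bound over the $\binom{d}{m}$ choices of $E$, since $\log\binom{d}{m} \approx m\log(d/m) \approx d\,\delta\log(1/\delta)$ in the relevant regime.

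The key probabilistic input is the concentration of measure on the sphere $\IS^{d-1}$: for a single unit vector $x$ and a fixed $m$-dimensional coordinate projection $1_E$, the function $x\mapsto \|1_E x\|$ is $1$-Lipschitz, so Levy's concentration inequality gives $\IP(\|1_E x\| \ge \sqrt{m/d} + t) \le 2e^{-c' d t^2}$ for a universal $c'>0$. The real difficulty is that $\|P_V|_{\ell_2 E}\|$ is not a Lipschitz function of a single vector but rather an operator norm of a projection built from $n$ vectors. The cleanest route I would take is to pass to the Gaussian model: replace the uniform vectors on the sphere by i.i.d.\ Gaussian vectors $g_1,\dots,g_n$ in $\IR^d$ (whose normalizations are the $x_j$), so that $V = \mathop{\mathrm{span}}\{g_j\}$ and $\|P_V|_{\ell_2 E}\|$ is controlled by the $n\times m$ Gaussian matrix $G_E := (1_E g_j)_j$ together with the well-conditioning of the full $d\times n$ Gaussian matrix. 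In the regime $d \ge D \gg n$, standard non-asymptotic random matrix theory (Gordon's theorem, or the $\varepsilon$-net bound on $\|G_E\|$) shows that the largest singular value of $G_E$ is of order $\sqrt{m} + \sqrt{n} \approx \sqrt{m}$ with overwhelming probability, and that the Gram matrix of the $g_j$ is close to $d\cdot\mathrm{id}_n$, which together pin $\|1_E P_V 1_E\| \lesssim m/d \lesssim \delta$.

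The main obstacle will be the simultaneous control over \emph{all} small $E$: I would handle it by the union bound sketched above, which is exactly why the threshold carries the extra $\sqrt{\log(1/\delta)}$ factor rather than merely $\sqrt{\delta}$. Concretely, for fixed $m \le \delta d$ I bound $\IP(\|1_E P_V 1_E\| \ge \eta^2)$ for $\eta = 100\sqrt{\delta\log(1/\delta)}$ by $e^{-c'' d \eta^2}$ uniformly in $E$ with $|E|=m$, then multiply by $\binom{d}{m} \le e^{m\log(ed/m)}$ and sum over $1 \le m \le \delta d$. The numerical constant $100$ is chosen generously so that $c'' d \eta^2 = c'' d \cdot 10^4 \delta\log(1/\delta)$ dominates $m\log(ed/m) \le \delta d\,\log(e/\delta)$ with room to spare, leaving a net exponent $-cd$ after absorbing the sum over $m$ into the constant $c$. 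The role of the hypothesis $d \ge D$ is to push the random matrix into the non-asymptotic regime where the singular-value estimates hold with the stated exponential probability, and to ensure the Gaussian approximation to the spherical model is valid; I would state the singular-value bounds as the quantitative engine and treat the union bound as the bookkeeping that fixes $c$ and $D$ in terms of $n$ and $\delta$.
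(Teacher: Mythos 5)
Your proposal is correct, and it rests on the same two quantitative pillars as the paper's proof --- Lévy-type concentration at deviation scale $t \sim \sqrt{\delta\log(1/\delta)\,d}$, and the entropy bound $\log\binom{d}{\delta d}\le H(\delta)d \approx \delta\log(1/\delta)\,d$ to absorb the union over $E$ --- but it handles the passage from single random vectors to the $n$-dimensional subspace by a genuinely different mechanism. The paper stays on the sphere: for each fixed coefficient vector $\alpha\in\IS^{n-1}$, the normalized combination $\alpha\cdot\mathbf{x}/\|\alpha\cdot\mathbf{x}\|$ is exactly uniform on $\IS^{d-1}$ by $O(d)$-invariance, so the single-vector estimate applies verbatim; uniformity over all unit vectors of $V$ is then obtained from an $\ve$-dense subset of $\IS^{n-1}$ (of cardinality $C(n,\ve)$ independent of $d$, hence harmless in the exponent), with the approximate orthonormality of $\mathbf{x}$ controlling $\|\alpha\cdot\mathbf{x}\|$ from below. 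You instead pass to the Gaussian model and bound $\|1_E P_V\| \le \|1_E G\|/s_{\min}(G)$ for the $d\times n$ Gaussian matrix $G$, invoking Gordon/Davidson--Szarek singular-value estimates: your lower bound $s_{\min}(G)\gtrsim\sqrt{d}$ plays exactly the role of the paper's asymptotic orthonormality, and the deviation bound on $\|1_E G\|$ replaces the net over $\IS^{n-1}$ (a net is hidden inside the standard proof of that bound). Your route buys explicit constants, a cleaner simultaneous control of all directions in $V$, and --- since $n$ enters only through additive $\sqrt{n}$ terms --- it would even tolerate $n$ growing with $d$, which the paper's $C(n,\ve)$-net formulation does not directly give; the paper's argument, in exchange, is self-contained, using nothing beyond Lévy's lemma and a binomial entropy estimate. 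Three small points to tidy up: the identification of the two models is exact, not approximate --- the span of i.i.d.\ Gaussians has the same law as the span of i.i.d.\ uniform sphere vectors --- so, contrary to your closing remark, $d\ge D$ is not needed to ``validate the Gaussian approximation'' but only to make $\sqrt{n/d}$ small and beat the entropy term; your heuristic $\sqrt{m}+\sqrt{n}\approx\sqrt{m}$ fails for tiny $m$, though harmlessly, since the deviation term $t\sim\eta\sqrt{d}$ dominates the threshold and $\binom{d}{m}$ is smaller for small $m$; and by monotonicity of $E\mapsto\|P_V|_{\ell_2E}\|$ you may restrict to $|E|=\lfloor\delta d\rfloor$ rather than summing over $m$, which is what the paper implicitly does.
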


\begin{proof}[Proof of Theorem~\ref{thm:B}]
Assume that $X$ contains a sequence $(X_n)_n$ of expanders. 
Put $\delta_n:=1/n$ and $\ve_n:=100\sqrt{\delta_n\log(1/\delta_n)}$. 
For each $n$ find an $n$-dimen\-sional subspace $V(n)$ 
in $\IR^{d(n)}$ that satisfies 
\[
\max\{ \| P_{V(n)} |_{\ell_2E} \| : E\subset[d(n)],\ |E|/d(n)\le\delta_k\}<\ve_k
\]
for all $k=1,\ldots,n$. 
We may assume $|X_n|=d(n)$ and view 
(the complexification of) 
$V(n)$ as a subspace of $\ell_2 X_n\subset\ell_2X$. 
We claim that $\prod_n \IB(V(n))$ is contained in 
$\mathrm{C}^*_{\mathrm{ql}}[X]$. 
Let $u=\diag_n (u_n)_n \in \prod_n \IB(V(n))$ with norm $1$ 
and $\ve>0$ be given arbitrarily. 
Fix $k$ with $\ve_k<\ve$ 
and take $R=R_k>0$ large enough. 
One has to show $\| 1_A u 1_B\|<\ve$ whenever 
$A, B\subset X$ are such that $\dist(A,B)>R$. 
We consider each summand $u_n$ separately. 
Since $R$ is taken large enough, 
$A,B\subset X_n$ with $\dist(A,B)>R$ implies that 
$\min\{|A|/d(n),|B|/d(n)\}<\delta_k$. 
Thus
\[
\|1_A u_n 1_B\|\le \min\{ \|1_A P_{V(n)}\|,\,\| P_{V(n)} 1_B\|\} <\ve_k<\ve
\]
for all $n\geq k$. This proves $u$ is quasi-local. 
\end{proof}

The point of Proof of Theorem~\ref{thm:B} is 
to show the operator $\diag_n (P_{V(n)})_n$ is quasi-local. 
As mentioned in Introduction, it is harder to tell 
if it belongs to $\mathrm{C}^*_{\mathrm{u}}[X]$.  

\begin{proof}[Proof of Lemma~\ref{lem:random}]
We may assume $\ve:=25\sqrt{\delta\log(1/\delta)}\le 1/4$. 
Also for notational simplicity, we assume $d\delta$ is an integer 
and write $\cP(d,\delta):=\{ E\subset [d] : |E|/d=\delta\}$. 
By the measure concentration phenomenon 
(L\'evy's Lemma, see e.g., 14.3.2, 14.3.3, and 15.2.2 in \cite{matousek} 
or Section 2 in \cite{ms}),  
every $E\in \cP(d,\delta)$ satisfies 
\[
\IP( \{ x\in \IS^{d-1} : \|1_E x\|>m_\delta+\ve \} ) < 2e^{-\ve^2 d/2}.
\]
Here $m_\delta$ is the median of $\|1_E x\|$, 
which is at most $\delta^{1/2}+12d^{-1/2}$ (see 14.3.3 in \cite{matousek}). 
It is important that the estimate is uniform in $\delta>0$. 
We have $\ve>m_\delta$ when $d$ is large enough. 
Recall $\log \left(\begin{smallmatrix}d\\ \delta d\end{smallmatrix}\right)\le H(\delta)d$, 
where $H(\delta)=-\delta\log\delta - (1-\delta)\log(1-\delta)$, 
because
$1=(\delta + (1-\delta))^d\geq\left(\begin{smallmatrix}d\\ \delta d\end{smallmatrix}\right)\delta^{\delta d}(1-\delta)^{(1-\delta)d}$. 
We have $H(\delta)<\ve^2/4$. 
Thus
\[
\IP( \{ x\in \IS^{d-1} : \max_{E\in \cP(d,\delta)}
 \| 1_Ex\| >2\ve \} ) < 2e^{-\ve^2 d /4}.
\]

By the measure concentration phenomenon, a random $x\in\IS^{d-1}$ 
is almost orthogonal with high probability to a fixed subspace $W\subset\IR^d$ 
of small dimension. Namely, $\|P_Wx\|^2$ is concentrated around 
its expectation $(\dim W)/d$ when $d$ is large. 
Thus a random $n$-tuple $\mathbf{x}=(x_1,\ldots,x_n) \in (\IS^{d-1})^n$ 
is asymptotically orthonormal as $d\to\infty$. 
This means that for every $\alpha=(\alpha_k)_{k=1}^n\in \IS^{n-1}$, 
the random vector 
$\alpha\cdot\mathbf{x}:=\sum_k \alpha_k x_k$ 
has asymptotically unit norm. 
Moreover, since the distribution of 
$\alpha\cdot\mathbf{x}/\| \alpha\cdot\mathbf{x}\|$ 
is $O(d)$-invariant, one has 
\[
\IP( \{ \mathbf{x}\in (\IS^{d-1})^n : \max_{E\in \cP(d,\delta)} 
\|1_E \alpha\cdot \mathbf{x}\| > 3\ve \} ) < 2e^{-\ve^2 d /4}
\]
for every $\alpha\in \IS^{n-1}$ 
and every $d$ large enough. 
Considering some $\ve$-dense subset in $\IS^{n-1}$, one sees 
\[
\IP( \{ \mathbf{x}\in (\IS^{d-1})^n : \max_{E\in \cP(d,\delta)}
 \|1_E|_{V(\mathbf{x})}\| > 4\ve \} ) < C(n,\ve) e^{-\ve^2 d /4}
\]
for some $C(n,\ve)>0$. 
Because $\|P_V|_{\ell_2E}\| =\|1_E|_V\|$, this proves the lemma.  
\end{proof}
\section{property~A implies $\mathrm{C}^*_{\mathrm{u}}[X]=\mathrm{C}^*_{\mathrm{ql}}[X]$}\label{sec:propertyA}
Let $\Prob(X)\subset\ell_1X$ denote the space of 
probability measures on $X$. 
Recall that $X$ has \emph{property~A} 
(see, e.g., Section~4 in \cite{ny})
if for every $\delta>0$ and $R>0$, 
there are $S>0$ and $\mu\colon X\to\Prob X$ that satisfy 
$\supp \mu_x \subset\Ball(x,S)$ for every $x$ and 
$\|\mu_x - \mu_y \|_1<\delta$ whenever $\dist(x,y)\le R$. 
It is proved in \cite{sz} that 
property~A implies $\mathrm{C}^*_{\mathrm{u}}[X]=\mathrm{C}^*_{\mathrm{ql}}[X]$. 
We give here a more direct and quantitative proof of this fact 
(in the uniform setting; the proof for the ``non-uniform'' 
case is similar, but more bulky). 

\begin{thm}[\cite{sz}]\label{thm:sz}
Let $X$ be a ulf metric space with property~A. 
Then the equality $\mathrm{C}^*_{\mathrm{u}}[X]=\mathrm{C}^*_{\mathrm{ql}}[X]$ holds. 
More precisely, any contraction $u\in\IB(\ell_2X)$ 
with finite $\ve$-propa\-gation satisfies 
$\dist(u,\mathrm{C}^*_{\mathrm{u}}[X]) < 18\ve^{1/4}$. 
\end{thm}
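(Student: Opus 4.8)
The plan is to use property~A to manufacture a unital completely positive ``smoothing'' map whose range consists of finite propagation operators, and then to show that it moves $u$ by at most $18\ve^{1/4}$. Fix the $\ve$-propagation radius $R_0$ of $u$, and apply property~A with a closeness scale $R\ge R_0$ and a smallness parameter $\delta$ to be optimized, obtaining $S>0$ and $\mu\colon X\to\Prob X$. Set $\xi_x:=\sum_z\mu_x(z)^{1/2}\delta_z\in\ell_2X$, a unit vector supported in $\Ball(x,S)$; the elementary inequality $|\sqrt a-\sqrt b|^2\le|a-b|$ gives $\|\xi_x-\xi_y\|^2\le\|\mu_x-\mu_y\|_1<\delta$ whenever $\dist(x,y)\le R$. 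These vectors define the positive type kernel $k(x,y):=\ip{\xi_x,\xi_y}$, which satisfies $k(x,x)=1$ and, since $\supp\xi_x\subset\Ball(x,S)$, vanishes once $\dist(x,y)>2S$. Writing $\Xi_z:=\diag(x\mapsto\mu_x(z)^{1/2})$ we have $\sum_z\Xi_z^2=1$, and the associated Schur multiplier $\Phi(u):=M_k(u)=\sum_z\Xi_z u\Xi_z$ is unital completely positive with $\Phi(u)\in\IC_{\mathrm{u}}^{2S}[X]\subset\mathrm{C}^*_{\mathrm{u}}[X]$. Thus $\dist(u,\mathrm{C}^*_{\mathrm{u}}[X])\le\|u-\Phi(u)\|$, and the whole problem reduces to estimating the Schur multiplier $\|M_{1-k}(u)\|$.

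For the estimate I would split the kernel $1-k$ by distance. On the near region $\dist(x,y)\le R$ one has $1-k(x,y)=\tfrac12\|\xi_x-\xi_y\|^2\le\delta/2$; decomposing this band into the $2N_X(R)$ graphs of partial translations furnished by Lemma~\ref{lem:graph}, and noting that the part of $M_{1-k}(u)$ supported on a single such graph has norm $\le(\delta/2)\|u\|$, the near contribution is at most $N_X(R)\delta$, which is harmless once $\delta$ is chosen small after $R$ is fixed. Equivalently, starting from the identity $u-\Phi(u)=\sum_z\Xi_z[\Xi_z,u]$, the near part of $\sum_z\|[\Xi_z,u]\eta\|^2$ is controlled using $\sum_z|\mu_x(z)^{1/2}-\mu_y(z)^{1/2}|^2=\|\xi_x-\xi_y\|^2<\delta$ together with $\sum_y|\ip{u\delta_x,\delta_y}|^2\le1$.

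The far region $\dist(x,y)>R$ is where $\ve$-propagation must enter, and this is the step I expect to be the main obstacle. Here $1-k$ is no longer small, so one cannot bound the multiplier crudely; indeed the far band of a contraction with $\ve$-propagation at most $R_0$ can have norm far exceeding $\ve$, because the single propagation bound does not sum over the infinitely many annuli of larger distance. The role of the property~A averaging is precisely to supply the cancellation that tames this. Using $u-\Phi(u)=\sum_z\Xi_z[\Xi_z,u]$ and the fact that each $\Xi_z$ is supported in $\Ball(z,S)$, the far part of each commutator $[\Xi_z,u]$ only involves blocks $1_Au1_B$ with $\dist(A,B)>R\ (\ge R_0)$, to which $\|1_Au1_B\|\le\ve$ applies; summing in $z$ against the resolution of the identity $\sum_z\Xi_z^2=1$ then converts these block bounds into a global estimate for the far contribution that is polynomial in $\ve$. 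Carrying out this summation so that the $\ve$-propagation is used with cancellation rather than term by term is the delicate point.

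Finally I would balance the two contributions. The near error decreases with $\delta$, while the far error, after the averaging, degrades as $\delta$ shrinks (the flatter the kernel, the weaker the cancellation), so the two are in tension through the property~A parameters $(\delta,R,S)$; optimizing this tradeoff is what produces the fractional exponent and yields the stated bound $\dist(u,\mathrm{C}^*_{\mathrm{u}}[X])<18\ve^{1/4}$. The equality $\mathrm{C}^*_{\mathrm{u}}[X]=\mathrm{C}^*_{\mathrm{ql}}[X]$ then follows by applying this to a quasi-local $u$ and letting $\ve\to0$.
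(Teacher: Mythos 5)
Your setup coincides with the paper's: property~A yields the positive type kernel $k(x,y)=\sum_z\mu_x(z)^{1/2}\mu_y(z)^{1/2}$ and the unital completely positive map $\Phi(u)=\sum_z\Xi_z u\Xi_z$ with range in $\IC_{\mathrm{u}}^{2S}[X]$, and your near-region estimate via Lemma~\ref{lem:graph} (norm at most $N_X(R)\delta\|u\|$ on the band $\dist(x,y)\le R$) is correct. But the far region is not a ``delicate point'' that can be deferred --- it is the entire content of the theorem, and your sketch contains no workable mechanism for it. The far part of $u-\Phi(u)$ is a Schur multiple of the \emph{entrywise} truncation of $u$ to $\{(x,y):\dist(x,y)>R\}$, whereas $\ve$-propagation only bounds \emph{block} compressions $1_Au1_B$; the far region cannot be covered by boundedly many such blocks (nor by boundedly many partial-translation graphs), so applying $\|1_Au1_B\|\le\ve$ to the pieces of each $[\Xi_z,u]$ and ``summing against $\sum_z\Xi_z^2=1$'' loses a factor of the unbounded number of terms. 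You correctly diagnose that term-by-term summation fails and that cancellation is needed, but you never exhibit the cancellation, and no choice of $(\delta,R,S)$ in your balancing heuristic can substitute for it.

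The paper supplies exactly the two ingredients your proposal lacks. First, Lemma~\ref{lem:lip}: if $0\le h\le1$ varies by at most $\delta$ over distance $R$ and $u$ has $\ve$-propagation at most $R$, then $\|[h,u]\|\le4\delta\|u\|+2\delta^{-1}\ve$. The proof decomposes $X$ into the level sets $E(n)=h^{-1}([\delta n,\delta(n+1)))$; for fixed $k$ with $|k|>1$ the operators $1_{E(m)}u1_{E(m-k)}$ have mutually orthogonal ranges and initial supports, so $\|\sum_m 1_{E(m)}u1_{E(m-k)}\|=\sup_m\|1_{E(m)}u1_{E(m-k)}\|<\ve$, and only $O(\delta^{-1})$ such diagonals occur --- this level-set trick is the \emph{only} place quasi-locality enters, and it is precisely the ``summation with cancellation'' you were looking for. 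Second, to reduce $u-\Phi(u)=\sum_z\Xi_z[\Xi_z,u]$ to a single commutator to which this lemma applies, the paper replaces the sum over $z$ by a Rademacher average: with $f_\omega=\sum_z r_z(\omega)f_z$ one has $\int f_\omega uf_\omega\,d\omega=\Phi(u)$, hence $u-\Phi(u)=\int f_\omega[f_\omega,u]\,d\omega$; then $f_\omega$ is truncated at $C=\delta^{-1/2}$ (using the fourth-moment bound $\int f_\omega(x)^4\,d\omega\le3$) and smoothed by a \emph{second} application of property~A --- note the paper invokes property~A twice, $\nu$ building $\Phi_\nu$ and $\mu$ building $h_\omega(x)=\sum_z\mu_x(z)g_\omega(z)$ with $|h_\omega(x)-h_\omega(y)|\le C\delta$ for $\dist(x,y)\le R$ --- after which Lemma~\ref{lem:lip} gives $\|[h_\omega,u_0]\|\le12\delta^{1/2}$ and Cauchy--Schwarz yields $\|u_0-\Phi_\nu(u_0)\|\le18\delta^{1/2}=18\ve^{1/4}$ for $\delta=\ve^{1/2}$. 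With only one application of property~A you have no Lipschitz function to feed into a commutator estimate, and without Lemma~\ref{lem:lip} (or an equivalent) you have no way to convert $\ve$-propagation into a norm bound on the far contribution; this is a genuine gap, not a technicality.
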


The following is extracted from Proof of Theorem 2.8 in \cite{st}. 
We replicate the proof for completeness. 
We view $\ell_\infty X$ as the diagonal subalgebra of $\IB(\ell_2X)$. 

\begin{lem}\label{lem:lip}
If $u \in \IB(\ell_2X)$ has $\ve$-propa\-ga\-tion at most $R$ 
and $h\in\ell_\infty X$ is such that $0\le h \le1$ and 
$|h(x)-h(y)|\le \delta$ for $\dist(x,y)\le R$, 
then $\|[h,u]\|\le 4\delta\|u\| + 2\delta^{-1}\ve$.
\end{lem}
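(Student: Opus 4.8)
The plan is to estimate the commutator $[h,u]$ by decomposing $h$ into level sets at scale $\delta$ and exploiting the $\ve$-propagation of $u$ across each level. First I would split the real-valued function $h$ into ``slabs'' according to its values: for $k=0,1,\ldots,\lceil\delta^{-1}\rceil$, set $A_k:=\{x\in X : h(x)\ge k\delta\}$ and write $1_k:=1_{A_k}$ for the corresponding diagonal projection. Then $h$ is approximated within $\delta$ in norm by the ``staircase'' $\tilde h:=\delta\sum_{k\ge1}1_k$, so that $\|[h,u]\|\le\|[\tilde h,u]\|+2\delta\|u\|$. This reduces the problem to controlling $\sum_k\delta\,[1_k,u]$, i.e., to estimating the individual commutators $[1_k,u]=1_k u(1-1_k)-(1-1_k)u1_k$.

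The next step is the heart of the matter: bounding each $\|[1_k,u]\|$ using both the trivial bound $2\|u\|$ and the $\ve$-propagation. The Lipschitz hypothesis $|h(x)-h(y)|\le\delta$ for $\dist(x,y)\le R$ is what ties the geometry to the level sets. Concretely, I would observe that $1_k u(1-1_k)$ has its two supports separated in the sense that if $x\in A_k$ (so $h(x)\ge k\delta$) and $y\notin A_k$ (so $h(y)<k\delta$), then $h(x)-h(y)>0$; more usefully, the slabs $A_k\setminus A_{k+1}$ form a partition into sets whose ``boundaries'' at distance $\le R$ lie in at most a couple of adjacent slabs, by the Lipschitz condition. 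The key point is that far-apart pieces contribute at most $\ve$ by $\ve$-propagation, while the near pieces are few in number because of the Lipschitz bound. One clean way to package this is to write $1_k u(1-1_k)=\sum_{j}1_k u\,1_{A_j\setminus A_{j+1}}(1-1_k)$ and note that only the terms with $j$ close to $k$ survive with the trivial bound, the rest being $\ve$-small after accounting for the distance separation between $A_k$ and the far level sets.

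The cleanest route, which I expect to carry the $4\delta\|u\|+2\delta^{-1}\ve$ shape, is to estimate $\sum_k\delta[1_k,u]$ by grouping the level indices into residue classes modulo some integer $m\approx\delta^{-1}$: within each residue class the relevant projections have disjoint supports separated by more than $R$ (because crossing $m$ levels forces $h$ to change by more than $\delta\cdot m$, which by the Lipschitz condition requires distance $>R$), so orthogonality lets one sum the $\ve$-propagation estimates to get a single factor of $\ve$ per class, and there are about $m\approx\delta^{-1}$ classes, producing the $\delta^{-1}\ve$ term; the coarse $\delta\|u\|$-type terms come from the staircase approximation and the handful of adjacent slabs. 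The main obstacle I anticipate is the bookkeeping in this grouping argument: one must verify that projections in the same residue class really are $R$-separated so that their cross terms vanish and the $\ve$-propagation bounds add in an $\ell_\infty$ (supremum) rather than summed fashion, and one must track the numerical constants carefully to land on $4\delta\|u\|$ and $2\delta^{-1}\ve$ rather than slightly worse constants. Everything else is a routine triangle-inequality assembly.
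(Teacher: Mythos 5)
Your first two paragraphs set up exactly the paper's strategy: your staircase $\tilde h=\delta\sum_{k\ge1}1_{A_k}$ coincides with the paper's $g=\sum_n \delta n\,1_{E(n)}$ for the partition into slabs $E(n):=h^{-1}([\delta n,\delta(n+1)))$, the staircase approximation costs $2\delta\|u\|$, and the Lipschitz hypothesis gives $\dist(E(m),E(n))>R$, hence $\|u_{m,n}\|\le\ve$ for $u_{m,n}:=1_{E(m)}u1_{E(n)}$, as soon as $|m-n|>1$ (a gap of two levels already forces the values of $h$ to differ by more than $\delta$). So the problem reduces, as in the paper, to bounding $[g,u]=\sum_{m,n}\delta(m-n)u_{m,n}$.

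The gap is in your third paragraph, where the bookkeeping you propose would not go through as stated. First, the projections you commute with, $1_k=1_{A_k}$ for the super-level sets, are \emph{nested}, not disjoint, so no orthogonality is available until everything is rewritten in terms of the disjoint slabs $E(n)$. Second, $R$-separation of supports within a residue class is not by itself enough to make norms add ``in sup fashion'': for $\|\sum_j v_j\|=\sup_j\|v_j\|$ you need the $v_j$ to have \emph{both} pairwise orthogonal domains and pairwise orthogonal ranges, and a residue class of column indices with unconstrained row indices provides only the former. Third, since $0\le h\le 1$ there are only $\lfloor\delta^{-1}\rfloor+1$ levels, so residue classes modulo $m\approx\delta^{-1}$ are essentially singletons and the grouping does no work (separation already holds for any index gap $\ge 2$). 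The correct grouping parameter is the diagonal offset $k=m-n$: for fixed $k$ the blocks $u_{m,m-k}$ have orthogonal domains and orthogonal ranges, so $\|\sum_{m-n=k}u_{m,n}\|=\sup_m\|u_{m,m-k}\|$, which is $\le\|u\|$ for every $k$ and $\le\ve$ for $|k|>1$; since $\delta|k|\le1$ and there are at most $2\lfloor\delta^{-1}\rfloor$ offsets with $|k|>1$, one gets $\|[g,u]\|\le 2\delta\|u\|+2\lfloor\delta^{-1}\rfloor\ve$, and adding the staircase error yields $\|[h,u]\|\le4\delta\|u\|+2\delta^{-1}\ve$. Once you replace residue classes by diagonals, your outline becomes the paper's proof verbatim.
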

\begin{proof}
Put $E(n):=h^{-1}([\delta n,\delta (n+1)))$ for $n=0,\ldots,\lfloor\delta^{-1}\rfloor$ 
and observe that $\bigsqcup_n E(n)=X$. 
For $|m-n|>1$, one has $\dist(E(m),E(n))>R$ 
and so $\| u_{m,n}\|<\ve$ for $u_{m,n}:= 1_{E(m)}u1_{E(n)}$. 
Consider $g := \sum_n \delta n 1_{E(n)}\in\ell_\infty X$. 
Then $\| g - h \|\le \delta$ and 
\[
\| [h,u] \| \approx_{2\delta\|u\|} \| [g,u] \| = 
\| \sum_{m,n} \delta(m-n) u_{m,n}\| 
\le 2 \delta\|u\| + 2\lfloor\delta^{-1}\rfloor\ve,
\]
because $\|\sum_{m-n=k} u_{m,n}\|\le \|u\|$ for any $k$ and $\le\ve$ if $|k|>1$.
\end{proof}

\begin{proof}[Proof of Theorem~\ref{thm:sz}]
Let $u_0$ be a contraction with $\ve$-propa\-gation at most $R$.
We take $S>0$ and $\mu\colon X\to\Prob(X)$ as in the definition 
of property~A for $\delta:=\ve^{1/2}$ and $R$. 
We further take $T>0$ and $\nu\colon X\to\Prob X$ 
for $\delta$ and $S$. 
Then $f_z(x) := \nu_x(z)^{1/2}$ 
satisfy $\sum_z f_z(x)^2 =1$ for every $x$, 
$\supp f_z \subset \Ball(z,T)$, and 
$\sum_z |f_z(x)-f_z(y)|^2 < \delta$ whenever $\dist(x,y)\le S$.
We view $f_z\in\ell_\infty X\subset\IB(\ell_2X)$ and 
define a unital completely positive 
map $\Phi_\nu$ on $\IB(\ell_2X)$ by 
$\Phi_\nu(u) := \sum_z f_z u f_z \in \IC_{\mathrm{u}}^{2T}[X]$. 
The RHS is convergent in the strong operator topology. 
It suffices to show $\| u_0 - \Phi_\nu(u_0) \| \le 18\delta^{1/2}$.

We consider the probability space $\Omega:=\{\pm1\}^X$ 
with the uniform probability measure and the 
i.i.d.\ Rademacher random variables 
$r_z\colon\Omega\ni\omega\mapsto\omega_z\in\{\pm1\}$. 
The family $\{ r_z\}_{z\in X}$ is orthonormal in $L^2(\Omega)$. 
We define a ``random'' function $f_\omega\in\ell_\infty X$ by 
\[
f_\omega(x) := \sum_{z\in X} r_z(\omega) f_z(x) = \sum_{z\in \Ball(x,T)} \omega_z f_z(x).
\] 
Observe that 
$\| f_\omega\|_\infty \le N_X(T)^{1/2}<\infty$, 
$\int f_\omega(x)^2\,d\omega=1$ for every $x$, 
and  
\[
\int |f_\omega(x) - f_\omega(y)|^2\,d\omega
 = \sum_z |f_z(x)-f_z(y)|^2 < \delta
\]
whenever $\dist(x,y)\le S$. 
Moreover, for every $u\in\IB(\ell_2X)$, one has 
\[
\int f_\omega u f_\omega\,d\omega  = \sum_z f_z u f_z = \Phi_\nu(u). 
\]
We will perturb $f$ inside $\ell_\infty(X;L^2(\Omega))$. 
The perturbation is matched with the Cauchy--Schwarz inequality 
that any strong operator topology measurable 
operator-valued random variables $a$ and $b$ satisfy  
\[
\| \int a(\omega)^*b(\omega)\,d\omega\|
 \le \|\int a(\omega)^*a(\omega)\,d\omega\|^{1/2} 
   \|\int b(\omega)^*b(\omega)\,d\omega\|^{1/2}. 
\]

Put $C:=\delta^{-1/2}$ and $g_\omega(x) := -C \vee f_\omega(x) \wedge C$ 
so that $\| g_\omega\|_\infty\le C$. 
Since 
\begin{align*}
\int f_\omega(x)^4\,d\omega
 &= \sum_{y,z,v,w}\int r_y(\omega)r_z(\omega)r_v(\omega)r_w(\omega) f_y(x)f_z(x)f_v(x)f_w(x)\,d\omega\\
 &=3\sum_{z,w;\ z\neq w}f_z(x)^2f_w(x)^2 + \sum_z  f_z(x)^4\\
 &=3 - 2 \sum_z  f_z(x)^4 \le 3,
\end{align*}
one has for every $x$ 
\[
\int |f_\omega(x)-g_\omega(x)|^2 \,d\omega
 \le C^{-2}\int f_\omega(x)^4\,d\omega \le 3C^{-2}.
\]
Put $h_\omega(x) := \sum_z \mu_x(z)g_\omega(z)$. 
Then one has $\| h_\omega\|_\infty\le C$, 
\begin{align*}
\int |g_\omega(x)-h_\omega(x)|^2\,d\omega
 &= \int |\sum_z \mu_x(z) (g_\omega(x)-g_\omega(z))|^2\,d\omega\\
 &\le \sum_z \mu_x(z) \int |g_\omega(x)-g_\omega(z)|^2\,d\omega
 < \delta
\end{align*}
since $\supp\mu_x \subset\Ball(x,S)$ and 
$|g_\omega(x)-g_\omega(z)|\le|f_\omega(x)-f_\omega(z)|$, and 
\begin{align*}
|h_\omega(x) - h_\omega(y)| &\le \| \mu_x-\mu_y \|_1\|g_\omega\|_\infty\le C\delta
\end{align*}
for every $\omega$ and every $x$ and $y$ 
such that $\dist(x,y)\le R$. 
By Lemma~\ref{lem:lip} applied to $(2C)^{-1}(h_\omega+C)$, 
one has
\[
\|[h_\omega,u_0]\|
 \le (2\delta + 4\delta^{-1}\ve)\cdot 2C \le 12\delta^{1/2}
\]
for every $\omega$.
Consequently, by the Cauchy--Schwarz inequality, one has
\begin{align*}
\| u_0 - \Phi_\nu(u_0) \| &= \|\int f_\omega[f_\omega,u_0]\,d\omega\|\\
 &\le \|\int f_\omega [h_\omega,u_0]\,d\omega\|+2(3^{1/2}C^{-1}+\delta^{1/2})\\
 &\le 12\delta^{1/2} + 6\delta^{1/2}.
\end{align*}
This completes the proof. 
\end{proof}

\end{document}